\documentclass[final]{IEEEtran}

\usepackage{amsthm,amssymb,latexsym,color,amsmath,pifont,epsfig,graphicx}
\usepackage{setspace}
\usepackage{subfig}  
\usepackage{url}

\usepackage{algorithm}
\usepackage{algorithmic}

\usepackage{tikz}
\usepackage{pgfplots}
\usepackage{dblfloatfix}
\usepackage{pifont,color}
\definecolor{darkgreen}{RGB}{0,100,0}   


\newtheorem{definition}{Definition}
\newtheorem{lemma}{Lemma}
\newtheorem{proposition}{Proposition}
\newtheorem{remark}{Remark}

\newcommand{\R}{\mathbb{R}}
\newcommand{\DefinedAs}[0]{\mathrel{\mathop:}=}
\newcommand{\mini}{\mathop{\mbox{minimize}}}
\newcommand{\ds}{\displaystyle}

\newcommand{\st}{\mbox{subject to}}

\newcommand{\amin}{\mathop{\mbox{argmin}}}

\newcommand{\card}{{\bf card}}
\newcommand{\rank}{{\bf rank}}

\newcommand{\bff}{{\bf f}}

\newcommand{\bfh}{{\bf h}}
\newcommand{\bfu}{{\bf u}}

\newcommand{\tc}{\textcolor}

\begin{document}

\title{Learning Low-Complexity Autoregressive Models via Proximal Alternating Minimization}

\author{Fu Lin and Jie Chen
\thanks{F.\ Lin is with the Systems Department, United Technologies Research Center, 411 Silver Ln, East Hartford, CT 06118, USA. E-mail: linf@utrc.utc.com}
\thanks{J.\ Chen is with IBM Thomas J. Watson Research Center, 1101 Kitchawan Road, Yorktown Heights, NY 10598, USA.  E-mail: chenjie@us.ibm.com}
}

\maketitle

    \begin{abstract}
We consider the estimation of the state transition matrix in vector autoregressive models, when time sequence data is limited but nonsequence steady-state data is abundant. To leverage both sources of data, we formulate the least squares minimization problem regularized by a Lyapunov penalty. We impose cardinality or rank constraints to reduce the complexity of the autoregressive model. We solve the resulting nonconvex, nonsmooth problem by using the proximal alternating linearization method (PALM). We show that PALM is globally convergent to a critical point and that the estimation error monotonically decreases. Furthermore, we obtain explicit formulas for the proximal operators to facilitate the implementation of PALM. We demonstrate the effectiveness of the developed method on synthetic and real-world data. Our experiments show that PALM outperforms the gradient projection method in both computational efficiency and solution quality.
    \end{abstract}

{\bf Keywords:}
Autoregressive models, Lyapunov penalty, nonconvex nonsmooth problem, steady-state data, proximal alternating linearized minimization.

\section{Introduction}
\label{sec.intro}

Vector autoregressive (VAR) models are widely used in the analysis of linear interdependence in time series data. A key step in building the VAR model is the identification of the state transition matrix. When time sequence data is adequate, the standard approach is to solve a least-squares problem. In modern applications, however, the dimension of the model is significantly larger than the number of time sequence measurements, which makes the model unidentifiable through the standard least-squares approach. Such scenarios include, for example, tracking the progression of brain neurological diseases, because the number of comprehensive brain scans is limited due to cost or medical concerns~\cite{huasch11}. In gene expression networks, the number of genes is typically much larger than the number of measurements, because of the intrusive nature of the measuring techniques~\cite{yosimohig05,zavjulboy11,wanhursch13}. 

In such situations, regularization is a typical rescue. For example, ridge regularization is a common approach for ensuring a unique solution. Other regularization approaches introduce additional structures to the solution. In particular, sparsity and low-rank structures are extensively studied. These regularization approaches are popular, in part because the resulting problem may be efficiently solved by using convex optimization techniques~\cite{fujsatcar07,wanlitsa07,gupbar08,huasch09,zavjulboy11,huasch11,hanliu13a,bahliuxin13,geizhasch15}.  In~\cite{fujsatcar07}, a sparse VAR model is found via Lasso for gene regulatory networks. In~\cite{bahliuxin13},  the state transition matrix is decomposed into a sparse matrix and a low-rank matrix by using convex penalty functions. Other approaches based on convex optimization can be found in~\cite{wanlitsa07,gupbar08,huasch09,zavjulboy11,huasch11,hanliu13a,geizhasch15}. 

In a different vein, steady-state data provide opportunity for improving model accuracy. When the VAR model is stable and steady-state data are abundant, several authors show that the steady-state data can help reduce the estimation error~\cite{huasch09,huasonsch10,huasch11,zavjulboy11,wanhursch13,largorhom15}. In~\cite{huasch11}, steady-state data are leveraged to form the Lyapunov regularization. In~\cite{zavjulboy11}, the perturbed steady-state data is used to infer sparse, stable gene expression networks. In~\cite{wanhursch13}, both steady-state and temporal data are integrated in the estimation of the gene regulatory networks. Other work that employs steady-state data for system identification includes~\cite{huasch09,huasonsch10,largorhom15}.

In this paper, we leverage both time sequence and steady-state nonsequence data for the model estimation. We propose a least-squares estimator regularized by the Lyapunov penalty subject to the cardinality or rank constraints on the state transition matrix. The identification problem is nonconvex due to the Lyapunov penalty and nonsmooth due to the low-complexity constraints. We solve the problem by using the proximal alternating linearization method~(PALM). An advantage of PALM is that it converges to a critical point starting from any initial condition. We prove this global convergence property of PALM and show that the estimation error is monotonically decreasing with the PALM iterations. We obtain closed-form expressions for the proximal operators to facilitate implementation. We show that PALM can handle the stability constraints and also the convex low-complexity (e.g., the $\ell_1$ or the nuclear-norm) constraints. We demonstrate that our approach outperforms the gradient projection method in both computational time and solution quality.

Our presentation is organized as follows. In Section~\ref{sec.var}, we formulate the estimation problem for the low-complexity VAR model. In Section~\ref{sec.PALM}, we present the PALM algorithm and derive explicit formulas for the proximal operators. In Section~\ref{sec.conv}, we show the global convergence of PALM by establishing the Lipschitz conditions and the KL property of the estimation problem. In Section~\ref{sec.exp}, we demonstrate the effectiveness of PALM via numerical experiments. In Section~\ref{sec.concl}, we summarize our contributions and discuss future directions.

\section{Model Identification via Lyapunov Penalty}
\label{sec.var}

In this section, we formulate the model identification problem using both time-sequence data and steady-state data. The performance of the model is measured by the least-squares error for the time-sequence data and the Lyapunov penalty for the steady-state data. We employ low-complexity penalty functions to promote sparsity and low-rank properties of the state transition matrix.

Consider a $p$-dimensional vector autoregressive model:
\begin{equation}
  \label{eq.ar}
  \phi (t+1) \,=\, A \phi (t) \,+\, \epsilon(t),
\end{equation}
where $\phi(t) \in \R^p$ is the state vector, $A \in \R^{p \times p}$ is the state transition matrix, and $\epsilon(t) \in \R^p$ is a zero-mean white stochastic process. We assume that the autoregressive model~\eqref{eq.ar} is asymptotically stable; that is, all eigenvalues of $A$ have modulus less than one. The state vector $\phi(t)$ has a steady-state distribution, whose covariance matrix $P$ is determined by the discrete-time Lyapunov equation
\[  
  A P A^T \,+\, Q \;=\; P,
\]
where $Q \in \R^{p \times p}$ is the covariance matrix of $\epsilon(t)$. Linear systems theory says that $P$ is positive definite if and only if $A$ is asymptotically stable~\cite{dulpag13}.

Our objective is to identify the state transition matrix $A$. Given a set of $n$ time sequence measurements of $\phi(t)$, the standard least-squares estimation is given by
\begin{equation}
  \label{eq.ls}
\mini_{X \in \R^{p \times p}} \; \frac{1}{2} \| X \Phi \,-\, \Psi \|_F^2 ,
\end{equation}
where $\Phi \DefinedAs [\,\phi(1), \cdots, \phi(n-1)\,] \in \R^{p \times (n-1)}$, $\Psi \DefinedAs [\,\phi(2), \cdots, \phi(n)\,] \in \R^{p \times (n-1)}$, and $\|\cdot\|_F$ denotes the Frobenius norm. We use $X$ to denote the unknown state transition matrix for the convenience of developing optimization details. When the number of time sequence data is less than the dimension of the states (i.e., $p > n - 1$), infinitely many solutions exist for~\eqref{eq.ls} and the state transition matrix is unidentifiable. 

We are interested in the scenario when the time sequence data is scarce but the steady-state nonsequence data is readily available~\cite{huasch09,huasonsch10,huasch11,gupbar08,lozabeliuros09}. In this case, Huang and Schneider~\cite{huasch11} propose the Lyapunov penalty as a regularization term
\begin{equation}
\label{eq.lyap}
 \| X P X^T \,+\, Q \,-\, P \|_F^2 .
\end{equation}
They show that the Lyapunov penalty helps improve the accuracy of the estimation~\cite{huasch11}. Since the covariance matrix $P$ is unknown, we replace it by the sample covariance 
\[
  S \; \DefinedAs \;  \frac{1}{N} \sum_{i=1}^N (z^i - \bar{z}) (z^i - \bar{z})^T
  ~\mbox{with}~
  \bar{z} \; \DefinedAs \; \frac{1}{N} \sum_{i=1}^N z^i,
\]
where $\{z^i\}_{i=1}^N$ is the steady-state nonsequence data. The identification problem with the Lyapunov regularization can be expressed as
\begin{equation}
  \label{eq.hs11}
\mini_{X  \in \R^{p \times p}} \; \frac{1}{2} \| X \Phi \,-\, \Psi  \|_F^2
          \,+\, \frac{\rho}{2}  \|   X S X^T \,+\, Q \,-\, S \|_F^2 ,
\end{equation}
where $\rho$ is a positive coefficient that balances the estimation error between the sequence and the nonsequence data. 

\tc{black}{Huang and Schneider study~\eqref{eq.hs11} and show that the Lyapunov penalty improves the solution quality. However, there is no guarantee that the solution of~\eqref{eq.hs11} is stable (i.e., spectral radius of $X$ is less than 1). We next incorporate stability constraint into~\eqref{eq.hs11}.}

\subsection{Stability Constraint}
\label{sec.stab}

Since stability is a necessary condition for the use of Lyapunov penalty~\eqref{eq.lyap}, we impose a stability constraint in the identification problem~\eqref{eq.hs11}. Let $\tau(X)$ denote the spectral radius of $X$, that is, $\tau(X) \DefinedAs \max\{|\lambda_i|\}_{i=1}^p$. \tc{black}{A stable autoregressive model can be obtained by solving the following problem:
\begin{equation}
  \label{eq.lcstab}
  \begin{array}{ll}
      \ds
      \mini_{X \in \R^{p \times p}} & \; \ds \frac{1}{2} \| X \Phi \,-\, \Psi  \|_F^2
              \,+\, \ds \frac{\rho}{2}  \|   X S X^T  \,+\, Q \,-\, S \|_F^2 \\
       \st & \;
       \tau(X) \,<\, 1.
  \end{array}
\end{equation}
Dealing with $\tau$ directly is difficult because spectral radius is neither convex nor locally Lipschitz~\cite{ovewom88,kimgupos09}. Alternatively, one can employ a convex function as an upper bound~\cite{xiaboy04}.
Since $\tau(X) \leq \|X\|_2 \leq \|X\|_F$ (see~\cite[Chapter 5]{horjoh12}), we can incorporate the stability constraint in the cost function 
\begin{equation}
  \label{eq.lcstabw}
      \ds
      \mini_{X \in \R^{p \times p}}  \,\, \ds \frac{1}{2} \| X \Phi \,-\, \Psi  \|_F^2
              \,+\, \ds \frac{\rho}{2}  \|   X S X^T  \,+\, Q \,-\, S \|_F^2 
      \,+\, \dfrac{\mu}{2} \| X X^T \|^2_F
\end{equation}
where $\mu$ is a positive constant. While one can employ the spectral norm $\|X\|_2$ as a less conservative proxy, we choose the Frobenius norm mainly because  both the Lyapunov penalty~\eqref{eq.lyap} and the stability penalty $\| X X^T \|_F^2$ are then quadratic functions of $X$ in Frobenius norm squared.} Hence, the stability term $\| X X^T \|_F^2$ is inconsequential in the design of solution methods. For this reason and for the ease of presentation, in what follows we omit the stability penalty, but comment on the modification of the algorithm when appropriate to address stability. \tc{black}{Detailed analysis of spectral radius and its relaxation in minimization problem can be found in~\cite{ovewom88,burove01,xiaboy04,kimgupos09}.}

\subsection{Low-Complexity Models}

In several applications, it is desired to impose sparsity or low-rank structures on the state transition matrix~\cite{fujsatcar07,wanlitsa07,gupbar08,huasch09,zavjulboy11,huasch11,hanliu13a,bahliuxin13,geizhasch15}. In gene expression networks, for example, the nonzero elements of the state transition matrix determine the interaction graph of the expression network~\cite{fujsatcar07,zavjulboy11}. A sparse state transition matrix is useful because one can construct a sparse network to explain experiment data. 

One common approach to promoting sparsity is to impose the $\ell_1$ constraint:
\begin{equation}
  \label{eq.l1}
\|X\|_{\ell_1} \DefinedAs \sum_{i,j=1}^p |X_{ij}| \, \leq \, l,
\end{equation}
where $l$ is a prescribed positive number.  Since the $\ell_1$ norm promotes sparsity~\emph{implicitly}, the actual number of nonzero elements in the solution is indirectly controlled by the threshold $l$. However, given a desired level of sparsity, the correct choice of $l$ is typically unknown a priori. An \emph{explicit} way to guarantee sparsity is to control the number of nonzero elements by the cardinality constraint:
\begin{equation}
  \label{eq.cardc}
  \card(X) \, \DefinedAs ~\mbox{number of nonzero entries of}~ X \, \leq \, s,
\end{equation}
where  $s$ is a given positive integer. Note that the cardinality constraint is harder to deal with than the $\ell_1$ constraint, because cardinality is a nonconvex function.

Another approach to obtaining low-complexity models is to impose the low-rank constraint. A low-rank state transition matrix is useful because it implies that the data can be explained by a  model with lower dimensions. An implicit way to promote low-rank solutions is to use the nuclear norm constraint~\cite{fazhinboy01,liuvan09,recfazpar10,fazponsuntse13}
\begin{equation}
\label{eq.nuc}
\| X \|_*  \, \DefinedAs \, \sum_{i=1}^p \sigma_i (X)  \, \leq \, \nu,
\end{equation}
where $\nu$ is a prescribed positive number and the $\sigma_i$s are the singular values. Similar to the sparsity case, the threshold $\nu$ is not known a priori. We impose a low-rank constraint by controlling the rank of the state transition matrix:
\begin{equation}
  \label{eq.rankc}
  \rank(X) \, \DefinedAs ~\mbox{number of nonzero singular values of}~ X \, \leq \, r,
\end{equation}
where $r$ is a given positive integer.

Hence we consider the following estimation problem:
\begin{equation}
  \label{eq.lc}
  \begin{split}
      \widehat{A}\,=\,&
      \ds
      \amin_{X \in \R^{p \times p}} \,\, \ds \frac{1}{2} \| X \Phi \,-\, \Psi  \|_F^2
              \,+\, \ds \frac{\rho}{2}  \|   X S X^T  \,+\, Q \,-\, S \|_F^2 \\
      & \st \,\,
      \mbox{constraint} ~ \eqref{eq.l1} ~\mbox{or}  ~\eqref{eq.cardc}~ \mbox{or}  ~\eqref{eq.nuc}~ \mbox{or} ~\eqref{eq.rankc}.
  \end{split}
\end{equation}
For the convex constraints~\eqref{eq.l1} and~\eqref{eq.nuc}, one may employ gradient projection methods; namely, taking a descent direction of the objective function and projecting it onto the convex constraint sets. A gradient projection method is proposed in~\cite{huasch11} to solve~\eqref{eq.lc} with the $\ell_1$ constraint~\eqref{eq.l1}. For the nonconvex constraints~\eqref{eq.cardc} and~\eqref{eq.rankc}, on the other hand, we develop the PALM algorithm in the subsequent section.

\section{Proximal Alternating Linearized Method}
\label{sec.PALM}

In this section, we develop the PALM algorithm for the identification problem of low-complexity models. This approach decomposes the problem into a sequence of smaller problems that can be solved efficiently. Furthermore, we show that PALM is global convergence to a critical point for both convex and nonconvex constraints in~\eqref{eq.lc}. 

We begin with a reformulation of the low-complexity autoregressive models~\eqref{eq.lc}
\begin{equation}
  \nonumber
  \begin{array}{ll}      
    \ds  \mini_{X, Y \in \R^{p \times p}} 
       &  \ds \frac{1}{2} \| X \Phi \,-\, \Psi  \|_F^2
              \,+\, \ds \frac{\rho}{2}  \|   Y S X^T  \,+\, Q \,-\, S \|_F^2 \\
      \st  &  Y \, - \, X \,=\, 0, \\
      &  \eqref{eq.l1}~ \mbox{or}  ~\eqref{eq.cardc} ~\mbox{or} ~\eqref{eq.nuc} ~\mbox{or} ~\eqref{eq.rankc}
  \end{array}
\end{equation}
where we replace one of the two $X$s in the Lyapunov penalty by a new variable $Y$. Let $f$ denote the least-squares term 
\begin{equation}
\label{eq.f}
f(X) \,=\, \frac{1}{2} \| X \Phi \,-\, \Psi \|_F^2,
\end{equation}
and let $g$ denote the indicator function of the individual constraints in~\eqref{eq.l1}-\eqref{eq.rankc}, for example,
\begin{equation}
  \label{eq.card}
  g(Y) \,=\,
  \left\{
    \begin{array}{ll}
      0, & \card(Y) \,\leq\, s \\
     \infty, & \mbox{otherwise}
    \end{array}
  \right.
\end{equation}
for the cardinality constraint~\eqref{eq.cardc} and 
\begin{equation}
  \label{eq.rank}
  g(Y) \,=\,
  \left\{
    \begin{array}{ll}
      0, & \rank(Y) \,\leq \,r \\
     \infty, & \mbox{otherwise}
    \end{array}
  \right.
\end{equation}
for the rank constraint~\eqref{eq.rankc}. Then we have 
\begin{equation}
   \label{eq.lcstd}
  \mini_{X,Y \in \R^{p \times p}} \; \omega(X,Y) \,\DefinedAs \, f(X) \,+\, g(Y) \,+\, h(X,Y) ,
\end{equation}
where $h$ denotes the coupling term 
\begin{equation}
  \label{eq.H}
  h(X,Y) \,=\, \frac{\rho_1}{2} \| Y S X^T \,+\, Q \,-\, S \|_F^2
  \,+\, \frac{\rho_2}{2} \|X \,-\, Y\|_F^2.
\end{equation}
Here, the penalty parameter $\rho_1>0$ resumes the role of $\rho$ in~\eqref{eq.lc} and $\rho_2>0$ is sufficiently large to penalize the discrepancy between $X$ and $Y$. \tc{black}{It is worth mentioning that the convergence of PALM does not depend on the choice of $\rho_1$ and $\rho_2$. This is in contrast to ADMM that may require sufficiently large quadratic term to ensure convergence when it is applied to nonconvex problems~\cite{honluoraz16,hajchawan16}.}

\subsection{Generic PALM Method}

PALM computes the proximal operators of the {\em uncoupled\/} functions $f$ and $g$, around the linearization of the {\em coupling\/} function $h$ at the previous iterate, hence the name~\cite{attbolred10,attbolsva13,parboy13,bolsabteb14}. It is instructive to put PALM in the context of other alternating methods. Suppose for the moment that $\omega(X,Y)$ is a strictly convex function. One approach to minimizing $\omega$ is the Gauss-Seidel iteration (also known as the coordinate descent):
\begin{subequations}
  \nonumber
  \begin{align}
    X^{k+1} & \;\in\; \amin_X ~ \omega(X,Y^k) \\
    Y^{k+1} & \;\in\; \amin_Y ~ \omega(X^{k+1},Y) .
  \end{align}
\end{subequations}
Convergence of the iteration requires a unique solution in each minimization step; otherwise, Gauss-Seidel may cycle indefinitely~\cite{pow73}. When $\omega$ is convex but {\em not strictly\/} convex, uniqueness can be achieved by including a quadratic proximal term
\begin{subequations}
  \begin{align}
  \label{eq.proxx}
    X^{k+1} & \;\in\;  \amin_X \left\{ \omega(X, Y^k) \,+\, \dfrac{c_k}{2} \| X \,-\, X^k \|^2_F \right\} \\
  \label{eq.proxy}
    Y^{k+1} & \;\in\; \amin_Y \left\{ \omega(X^{k+1}, Y) \,+\, \dfrac{d_k}{2} \| Y \,-\, Y^k \|^2_F \right\}     ,
  \end{align}
\end{subequations}
where $c_k$ and $d_k$ are positive coefficients. This class of proximal methods is well studied; see~\cite{parboy13} for a recent survey. 

When $\omega$ is nonconvex, as in our case~\eqref{eq.lcstd}, we need to modify the proximal terms to ensure convergence. Instead of taking the proximal term around $X^k$ as in~\eqref{eq.proxx}, we take the term around $X^k$ modified with a scaled partial gradient of $h$:
\begin{equation}
  \label{eq.proxf}
  X^{k+1} \; \in \;
  \amin_X \left\{ f(X) \,+\, \frac{c_k}{2} \| X \,-\, U^k \|_F^2 \right\}  ,
\end{equation}
where
$
  U^k \,=\, X^k \,-\, \frac{1}{c_k} \nabla_X h(X^k,Y^k).
$
The parameter $c_k$ is chosen to be greater than the Lipschitz constant of $\nabla_X h$; in particular,
$
	c_k \,=\, \gamma_1 L_1(Y^k) 
$
for some $\gamma_1 > 1$ where $L_1$ is the Lipschitz constant of $\nabla_X h$.

Similarly, we take the proximal term around $Y^k$ modified with a scaled partial gradient of $h$:
\begin{equation}
  \label{eq.proxg}
  Y^{k+1} 
  \; \in \;
  \amin_Y \left\{ g(Y) \,+\, \frac{d_k}{2} \| Y \,-\, V^k \|_F^2 \right\}  ,
\end{equation}
where
$
  V^k \,=\, Y^k \,-\, \frac{1}{d_k} \nabla_Y h (X^{k+1},Y^k).
$
The parameter $d_k$ is determined by $d_k = \gamma_2 L_2(X^{k+1})$ for some $\gamma_2 > 1$ where $L_2$ is the Lipschitz constant of $\nabla_Y h$. PALM alternates between updating $(X,Y)$ by using the iterations~\eqref{eq.proxf}-\eqref{eq.proxg}.

\subsection{Formulas for Lipschitz Constants and Solutions to~\eqref{eq.proxf}-\eqref{eq.proxg}}

To implement~\eqref{eq.proxf}-\eqref{eq.proxg}, one needs the Lipschitz constants $L_1$ and $L_2$ in order to determine the coefficients $c_k$ and $d_k$, respectively. Taking the partial gradients of $h$ yields
\[
  \begin{array}{l}
    \nabla_X h \,=\, \rho_1( X S^T Y^TY S  \,+\, (Q \,-\, S)^T Y S) \,+\, \rho_2(X \,-\, Y) \\
    \nabla_Y h \,=\, \rho_1(Y S X^TX S^T  \,+\, (Q \,-\, S) X S^T) \,+\, \rho_2(Y \,-\, X) .
  \end{array}
\]
Since $\nabla_X h$ is linear in $X$ and $\nabla_Y h$ is linear in $Y$, we obtain explicit formulas for the Lipschitz constants 
\begin{equation}
  \label{eq.lipcon}
  \begin{array}{l}
    L_1(Y) \,=\, \ds \| \rho_1 S^T Y^T Y S  \,+\,  \rho_2 I\|_2  \\[0.1cm]
    L_2(X) \,=\, \ds \| \rho_1 S X^T X S^T  \,+\, \rho_2 I \|_2
  \end{array}
\end{equation}
where $\| \cdot \|_2$ denotes the largest singular value of a matrix.

We next show that the proximal operators~\eqref{eq.proxf}-\eqref{eq.proxg} can be computed efficiently. The proximal operator~\eqref{eq.proxf} can be expressed as
\[
    X^{k+1} \; \in \;   \amin_X \left\{ \frac{1}{2} \|X \Phi \,-\, \Psi \|_F^2 \,+\, \frac{c_k}{2} \| X \,-\, U^k \|_F^2 \right\}.
\]
Solving this least-squares problem yields
\[
X^{k+1} \;=\; ( \Psi \Phi^T + c_k U^k) ( \Phi \Phi^T \,+\, c_k I)^{-1},
\]
where $I$ denotes the identity matrix. When the number of states is no less than the number of time sequence data (i.e., $p \geq n$),  one can reduce the computational cost by inverting $\Phi^T \Phi \,+\, c_k I$ instead of $\Phi \Phi^T \,+\, c_k I$, since the Woodbury formula gives 
\[
X^{k+1} \;=\;  (c_k^{-1} \Psi \Phi^T + U^k)(I - \Phi ( c_k I + \Phi^T \Phi)^{-1} \Phi^T).
\]

The proximal operator~\eqref{eq.proxg} can be expressed as
\[
\begin{array}{ll}
\ds \mini_Y  &  \dfrac{d_k}{2} \| Y \,-\, V^k \|_F^2
\\[0.1cm]
\st & \eqref{eq.l1} ~\mbox{or} ~\eqref{eq.cardc} ~\mbox{or} ~\eqref{eq.nuc} ~\mbox{or} ~\eqref{eq.rankc}.
\end{array}
\]
For the cardinality constraint~\eqref{eq.cardc}, the solution is obtained by keeping the $s$ largest elements of $V^k$ in magnitude and zero out the rest of the elements in $V^k$. This is because the squared Frobenius norm is the sum of the squared elements of $Y-V^k$. For the rank constraint~\eqref{eq.rankc}, by the Eckart--Young theorem, the solution is the best rank-$r$ approximation of $V^k$ obtained by the truncated SVD; that is, keeping the $r$-largest singular value and setting the remaining singular values of $V^k$ to zero. 

For the $\ell_1$ constraint~\eqref{eq.l1}, the projection onto the $\ell_1$-ball can be computed by an algorithm developed in~\cite{ducshasin08}. For the nuclear-norm constraint~\eqref{eq.nuc}, the optimal solution $Y$ can be computed by performing the singular value decomposition of $V^k$ and then projecting the singular values of $V^k$ onto the $\ell_1$-ball. 


We summarize the computational steps in Algorithm~\ref{alg.palm}, focusing on only the constraints~\eqref{eq.cardc} and~\eqref{eq.rankc}. 
\begin{algorithm*}[htb]
   \caption{Proximal Alternating Linearization Method for~\eqref{eq.lcstd}}
   \label{alg.palm}
\begin{algorithmic}
   \STATE Initialization: Start with any $(X^0,Y^0)$.
   \FOR{$k=0,1,2,\ldots$ until convergence}
   \STATE $\triangleright$ The following section computes $X^{k+1}$
   \STATE Compute the Lipschitz constant 
   $L_1(Y^k) \,=\, \| \rho_1 S^T Y^{kT} Y^k S  +  \rho_2 I\|_2$.
   \STATE Compute $c_k \,=\, \gamma_1 L_1(Y^k)$ for some $\gamma_1 > 1$.
   \STATE Compute the partial gradient \\
   $\nabla_X h(X^k,Y^k) \,=\, \rho_1( X^k S^T Y^{kT} Y^k S  \,+ \,(Q - S)^T Y^k S) \,+\, \rho_2(X^k - Y^k)$.
   \STATE Update the proximal point 
   $U^k \,=\, X^k - \frac{1}{c_k} \nabla_X h(X^k,Y^k)$.
   \IF{$p<n$}
   \STATE $X^{k+1} \,=\, ( \Psi \Phi^T + c_k U^k) ( \Phi \Phi^T + c_k I)^{-1}$
   \ELSE
   \STATE $X^{k+1} \,=\,  (c_k^{-1} \Psi \Phi^T + U^k)(I - \Phi ( c_k I + \Phi^T \Phi)^{-1} \Phi^T)$.
   \ENDIF
   \STATE $\triangleright$ The following section computes $Y^{k+1}$
   \STATE Compute the Lipschitz constant \\
   $L_2(X^{k+1}) \,=\, \| \rho_1 S X^{(k+1) T} X^{k+1} S^T  + \rho_2 I \|_2$.
   \STATE Compute $d_k \,=\, \gamma_2 L_2(X^{k+1})$ for some $\gamma_2 > 1$.
   \STATE Compute the partial gradient \\
   $\nabla_Y h(X^{k+1},Y^k) \,=\, \rho_1(Y^k S (X^{k+1})^T X^{k+1} S^T  \,+ \,(Q - S) X^{k+1} S^T) \,+\, \rho_2(Y^k - X^{k+1})$.
   \STATE Update the proximal point 
   $V^k \,=\, Y^k - \frac{1}{d_k} \nabla_Y h(X^{k+1},Y^k)$.
   \IF{$g$ is the cardinality constraint~\eqref{eq.cardc}}
   \STATE $Y^{k+1} \,=\, \mathcal{I}_s\circ V^k$, where $(\mathcal{I}_s)_{ij}=1$ if $(|V^k|)_{ij} \geq $ $s$-th largest element of $|V^k|$, and $(\mathcal{I}_s)_{ij} = 0$ otherwise.
   \ELSIF{$g$ is the rank constraint~\eqref{eq.rankc}}
   \STATE $Y^{k+1}$ is the rank-$r$ truncated SVD of $V^k$.
   \ENDIF
   \ENDFOR
\end{algorithmic}
\end{algorithm*}

We conclude this section with a remark on stability.
\begin{remark}[Stability] 
As discussed in Section~\ref{sec.stab}, we can incorporate the stability constraint by penalizing $\| X X^T \|_F^2$ in the cost function. In this case, the coupling term becomes
\[
\begin{array}{rcl}
  h(X,Y) &=& \dfrac{\rho_1}{2} \| Y S X^T + Q - S \|_F^2
  \,+\, \dfrac{\rho_2}{2} \|X - Y\|_F^2 
  \\
  && + \dfrac{\mu}{2} \| Y X^T \|_F^2.
\end{array}
\]
Its partial gradients are given by
\[
  \begin{array}{rcl}
    \nabla_X h &=& \rho_1( X S^T Y^T Y S  + (Q -  S)^T Y S) + \rho_2(X - Y)  
    \\ 
                         && + \mu X Y^T Y     \\[0.1cm]
    \nabla_Y h &=& \rho_1(Y S X^TX S^T  + (Q - S) X S^T) + \rho_2(Y - X) 
    \\
                         && +  \mu Y X^T X,
  \end{array}
\]
whose Lipschitz constants are given by
\[
  \begin{array}{l}
    L_1(Y) \,=\, \ds \| \rho_1 S^T Y^T Y S \,+\, \mu Y^T Y  \,+\,  \rho_2 I\|_2  \\[0.1cm]
    L_2(X) \,=\, \ds \| \rho_1 S X^T X S^T  \,+\, \mu X^T X \,+\, \rho_2 I \|_2.
  \end{array}
\]
Therefore, Algorithm~\ref{alg.palm} applies by modifying the computation of the Lipschitz constants.
\end{remark}

\begin{remark}[Comparison with ADMM]
\tc{black}{The alternating direction method of multipliers~(ADMM) has been a very powerful tool in distributed control and optimization~\cite{boyparchu11,honluoraz16,hajchawan16,wanyinzen18}. Since ADMM is a class of proximal algorithms~\cite{parboy13}, it is closely related to PALM. It is worth mentioning that ADMM is most useful for minimizing the sum of convex functions. For certain classes of nonconvex problems, the convergence of ADMM has been established in~\cite{honluoraz16,hajchawan16,wanyinzen18}. For the cardinality~\eqref{eq.cardc} and the rank function~\eqref{eq.rankc}, ADMM may not converge for~\eqref{eq.lc}. The solution to which ADMM converges may also depend on the value of $\rho$; see~\cite{honluoraz16}. Furthermore, efficient methods for subproblems in ADMM that deal with the Lyapunov penalty are yet to be developed.}
\end{remark}

%

\section{Convergence Analysis}
\label{sec.conv}

In this section, we show that Algorithm~\ref{alg.palm} globally converges to a critical point of the nonconvex, nonsmooth problem~\eqref{eq.lcstd}. Furthermore, the objective value is monotonically decreasing throughout the PALM iterations. We build upon the seminal work on the convergence of PALM for generic problems~\cite{bolsabteb14}. Our contributions are the establishments of the required Lipschitz conditions and the KL property.

We begin with a technical lemma on the Lipschitz conditions of the objective function $\omega$.
\begin{lemma} \label{lem.pro}
  The objective function $\omega$ in~\eqref{eq.lcstd} satisfies the following properties:
  \begin{enumerate}
  \item \label{pro.lb}
   $\inf_{X,Y} \omega(X,Y) > -\infty$, $\inf_X f(X) > -\infty$, and $\inf_Y g(Y) > -\infty$.
  \item \label{pro.lip}
   For a fixed $Y$, the partial gradient $\nabla_X h(X,Y)$ is globally Lipschitz; that is, there exists $L_1(Y)$ such that
$
  \| \nabla_X h (X_1,Y) - \nabla_X h(X_2,Y) \|_F \leq L_1(Y) \| X_1 -  X_2 \|_F
$
for all $X_1$ and $X_2$. Likewise, for a fixed $X$, the partial gradient $\nabla_Y h(X,Y)$ is globally Lipschitz; that is, there exists $L_2(X)$ such that
$
  \| \nabla_Y h (X,Y_1) - \nabla_Y h(X,Y_2) \|_F \leq L_2(X) \| Y_1 -  Y_2 \|_F
$
for all $Y_1$ and $Y_2$.
\item \label{pro.lipbd}
  There exist bounded constants $q_1^-$, $q_1^+$, $q_2^-$, $q_2^+ > 0$ such that
  \begin{equation}
    \label{eq.lipbd}
    \begin{array}{c}
    \inf_k \{ L_1(Y^k) \}  \, \geq \, q_1^-
     ~~\mbox{and}~~
    \inf_k \{ L_2(X^k) \}  \, \geq \, q_2^-
      \\[0.1cm]
    \sup_k \{ L_1(Y^k) \}  \, \leq \, q_1^+
     ~~\mbox{and}~~
    \sup_k \{ L_2(X^k) \}  \, \leq \, q_2^+.
    \end{array}
  \end{equation}
\item \label{pro.lipc2}
  The entire gradient $\nabla h(X,Y)$ is Lipschitz continuous on the bounded subsets of $\R^{p \times p} \times \R^{p \times p}$.
  \end{enumerate}
\end{lemma}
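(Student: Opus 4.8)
The plan is to check the four items in turn, dispatching items~1, 2, and 4 by direct structural inspection and concentrating the real effort on item~3.

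For item~1, everything in sight is nonnegative: $f(X)=\tfrac12\|XC-D\|_F^2\ge 0$, the coupling term $H$ is a sum of two squared Frobenius norms and hence $H\ge 0$, and $g$ is an indicator function so $\inf_Y g(Y)=0$ provided the feasible set is nonempty (it is, for each of the constraints \eqref{eq.l1} through \eqref{eq.rankc}). Therefore $\Phi=f+g+H\ge 0$ and all three infima exceed $-\infty$. For item~2, I would read the Lipschitz moduli directly off the displayed gradient formulas: since $\nabla_X H(\cdot,Y)$ is affine in $X$, the difference telescopes to $\nabla_X H(X_1,Y)-\nabla_X H(X_2,Y)=(X_1-X_2)\bigl(\rho_1 S^T Y^T Y S+\rho_2 I\bigr)$, and submultiplicativity of the Frobenius norm, $\|AB\|_F\le\|A\|_F\|B\|_F$, produces exactly the constant $L_1(Y)$ of \eqref{eq.lipcon}; the argument for $L_2(X)$ is identical. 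Item~4 is likewise immediate once one observes that each entry of $\nabla_X H$ and $\nabla_Y H$ is a polynomial in the entries of $X$ and $Y$ (degree one in the variable being differentiated, at most degree two in the other), so $\nabla H$ is $C^\infty$; its Jacobian is continuous and therefore bounded on any bounded set, and the mean-value inequality then delivers Lipschitz continuity of $\nabla H$ there, with a modulus expressible in terms of the radius, $\rho_1,\rho_2,\|S\|_2$ and $\|Q-S\|_F$.

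Item~3 is the crux. The lower bounds are easy: $\rho_1 S^T Y^T Y S=\rho_1(YS)^T(YS)\succeq 0$, so $M(Y):=\rho_1 S^T Y^T Y S+\rho_2 I\succeq\rho_2 I$, whence $L_1(Y)=\|M(Y)\|_F\ge\|M(Y)\|_2=\lambda_{\max}(M(Y))\ge\rho_2$ uniformly in $Y$; this gives $q_1^-=\rho_2$, and $q_2^-=\rho_2$ identically. The upper bounds reduce to showing the iterate sequences are bounded, because $\|X^k\|_F,\|Y^k\|_F\le R$ immediately yields $L_1(Y^k)\le\rho_1\|S\|_2^2R^2+\rho_2\sqrt{p}=:q_1^+$ and similarly $q_2^+$.

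To bound the iterates I would combine two facts. First, the PALM updates are built so that $c_k=\gamma_1 L_1(Y^k)>L_1(Y^k)$ and $d_k=\gamma_2 L_2(X^k)>L_2(X^k)$; the proximal-gradient descent lemma (which uses only item~2) then forces $\Phi(X^{k+1},Y^{k+1})\le\Phi(X^k,Y^k)\le\Phi(X^0,Y^0)=:\alpha$, independently of any uniform upper bound on the moduli---so there is no circularity. Second, I would show the level set $\{\Phi\le\alpha\}$ is bounded. On it, $\tfrac{\rho_2}{2}\|X-Y\|_F^2\le\alpha$ ties $\|X\|_F$ and $\|Y\|_F$ together, while $\tfrac{\rho_1}{2}\|YSX^T+Q-S\|_F^2\le\alpha$ bounds $\|YSX^T\|_F$; writing $Y=X-(X-Y)$ and invoking positive definiteness of the sample covariance, $\|XSX^T\|_F\ge\lambda_{\min}(S)\|XX^T\|_F\ge\tfrac{\lambda_{\min}(S)}{\sqrt p}\|X\|_F^2$, converts these into a quadratic inequality in $\|X\|_F$ that caps it. This coercivity step is the main obstacle: without some such condition $\Phi$ is constant along rays $X=Y=t\,wv^T$ with $v$ a common null vector of $S$ and $C$, so the argument genuinely needs $S\succ0$---or, equivalently, the stability penalty $\tfrac{\mu}{2}\|YX^T\|_F^2$ of the Stability remark, which supplies the same quartic growth with no assumption on $S$. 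Once boundedness is in hand, the upper bounds $q_1^+,q_2^+$ follow as above, completing item~3.
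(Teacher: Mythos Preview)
Your treatment of items~1, 2, and 4 matches the paper's almost verbatim: nonnegativity for item~1, affinity of the partial gradients in the respective variable for item~2, and twice continuous differentiability plus the mean-value theorem for item~4. For the lower bounds in item~3 you and the paper also agree, though the paper obtains $q_1^-$ by expanding $L_1^2(Y)$ as a sum of nonnegative terms rather than via the PSD ordering $M(Y)\succeq\rho_2 I$; both give $q_1^-=\rho_2$ (up to a dimensional factor).

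The genuine divergence is in the upper bounds of item~3. The paper dispatches them in one sentence: ``since $Y^k$ is the minimizer of a feasible problem over a bounded set, it is bounded for all $k$,'' and then asserts an analogous argument for $X^k$. That sentence is literally correct for the $\ell_1$ and nuclear-norm balls, but the cardinality and rank constraint sets are \emph{unbounded}, so it does not cover precisely the nonconvex cases that are the paper's main focus; nor is there an evident ``analogous'' bounded set for the unconstrained $X$-update. Your route---monotone descent of $\Phi$ (which, as you correctly note, uses only item~2 and so involves no circularity) followed by coercivity of $\Phi$ on its sublevel sets---is the rigorous way to close this gap, and your identification of $S\succ 0$ (or, alternatively, the stability penalty $\tfrac{\mu}{2}\|YX^T\|_F^2$) as the hypothesis that supplies the needed quartic growth is exactly right. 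In short: the paper's argument is shorter but incomplete for the cardinality and rank constraints; yours is longer but actually establishes the claim, at the mild additional cost of assuming the sample covariance is nonsingular.
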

\begin{proof}
Property~\ref{pro.lb} is a direct consequence of the nonnegativity of $f$ in~\eqref{eq.f}, $h$ in~\eqref{eq.H}, and the indicator function $g$ in~\eqref{eq.card} and~\eqref{eq.rank}. Property~\ref{pro.lip} follows from the Lipschitz constants derived in~\eqref{eq.lipcon}. To show property~\ref{pro.lipbd}, note that $L_1(Y)$ in~\eqref{eq.lipcon} is clearly bounded below for all $Y$. In particular,
\[
  L_1^2(Y) \,=\, \rho_1^2 \| S^T Y^T Y S \|_F^2  \,+\, 2 \rho_1 \rho_2 \| Y S \|_F^2 \,+\, \rho_2^2  \,\geq\, \rho_2^2 \, > \, 0.
\]
On the other hand, since $Y^k$ is the minimizer of a feasible problem over a bounded set, it is bounded for all $k$ and hence $L_1(Y^k)$ is bounded above. Thus, the entire sequence $L_1(Y^k)$ satisfies the upper and lower bounds in~\eqref{eq.lipbd}. An analogous argument shows that the Lipschitz constant $L_2(X)$ satisfies~\eqref{eq.lipbd}. Property~\ref{pro.lipc2} is a direct consequence of the twice continuous differentiability of $h$ and the mean value theorem.
\end{proof}

A few comments are in order. Property~\ref{pro.lb} ensures that each proximal operator in PALM is well defined, as well as the minimization of $\omega$. Property~\ref{pro.lip} on the boundedness of the Lipschitz constants is critical for convergence. Note that the block-Lipschitz property in $X$ and $Y$ is weaker than standard assumptions in proximal methods that require $\omega$ to be globally Lipschitz in {\em joint\/} variables $(X,Y)$. Property~\ref{pro.lipbd} guarantees that the Lipschitz constants for the partial gradients are lower and upper bounded by finite numbers. Property~\ref{pro.lipc2} is a technical condition for controlling the distance between two consecutive steps in the sequence $(X^k,Y^k)$.

\begin{proposition}
  \label{pro.mono}
Let $Z^k \DefinedAs (X^k,Y^k)$ be a sequence generated by Algorithm~\ref{alg.palm}.  Then,
    \[
       \frac{\delta}{2} \| Z^{k+1} - Z^k \|^2_F \; < \; \omega(Z^k) \,-\, \omega(Z^{k+1}), \quad \forall k \geq 0
    \]
    where $\delta = \min \{(\gamma_1 - 1) q_1^-, (\gamma_2 -1) q_2^-\}$.
Furthermore, $\lim_{k\to \infty} \|Z^{k+1} - Z^k \|^2_F = 0$.
\end{proposition}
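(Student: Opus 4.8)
The plan is to establish the claimed sufficient decrease one block at a time, treating the $X$-update and the $Y$-update separately and then chaining them through the common value $\Phi(X^{k+1},Y^k)$. In each block the engine is the classical descent lemma (the quadratic upper bound for a function with Lipschitz gradient) combined with the optimality of the corresponding proximal subproblem. First I would rewrite the $X$-update in its proximal-linearized form: expanding $\tfrac{c_k}{2}\|X-U^k\|_F^2$ with $U^k = X^k - \tfrac{1}{c_k}\nabla_X H(X^k,Y^k)$ shows that, up to a constant independent of $X$, the subproblem \eqref{eq.proxf} is
\[
  X^{k+1} \in \amin_X \Big\{ f(X) + \langle \nabla_X H(X^k,Y^k),\, X - X^k\rangle + \tfrac{c_k}{2}\|X-X^k\|_F^2 \Big\},
\]
where $\langle\cdot,\cdot\rangle$ is the Frobenius inner product. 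Comparing the optimal value against the feasible choice $X=X^k$ gives $f(X^{k+1}) + \langle \nabla_X H(X^k,Y^k), X^{k+1}-X^k\rangle + \tfrac{c_k}{2}\|X^{k+1}-X^k\|_F^2 \le f(X^k)$. Since property~\ref{pro.lip}) of Lemma~\ref{lem.pro} makes $X\mapsto\nabla_X H(X,Y^k)$ globally $L_1(Y^k)$-Lipschitz, the descent lemma yields
\[
  H(X^{k+1},Y^k) \le H(X^k,Y^k) + \langle \nabla_X H(X^k,Y^k),\, X^{k+1}-X^k\rangle + \tfrac{L_1(Y^k)}{2}\|X^{k+1}-X^k\|_F^2.
\]
Adding the two inequalities so that the first-order terms cancel, and adding the constant $g(Y^k)$ to both sides, produces the $X$-block estimate $\Phi(X^{k+1},Y^k) \le \Phi(X^k,Y^k) - \tfrac{c_k-L_1(Y^k)}{2}\|X^{k+1}-X^k\|_F^2$.

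I would then repeat the identical argument for the $Y$-update. The only change is that the uncoupled term $g$ is now the nonsmooth indicator; however, the descent lemma is applied solely to the smooth coupling $H$, and the optimality comparison is made against the feasible point $Y=Y^k$ (which keeps $g$ finite), so the nonsmoothness of $g$ never enters. Using property~\ref{pro.lip}) of Lemma~\ref{lem.pro} for the $L_2(X^{k+1})$-Lipschitz continuity of $Y\mapsto\nabla_Y H(X^{k+1},Y)$, I obtain
\[
  \Phi(X^{k+1},Y^{k+1}) \le \Phi(X^{k+1},Y^k) - \tfrac{d_k-L_2(X^{k+1})}{2}\|Y^{k+1}-Y^k\|_F^2.
\]
Finally I would invoke the step-size rule together with property~\ref{pro.lipbd}) of Lemma~\ref{lem.pro}. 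Because $c_k=\gamma_1 L_1(Y^k)$ and $d_k=\gamma_2 L_2(X^{k+1})$ with $\gamma_1,\gamma_2>1$, the two coefficients equal $\tfrac{(\gamma_1-1)L_1(Y^k)}{2}\ge\tfrac{(\gamma_1-1)q_1^-}{2}$ and $\tfrac{(\gamma_2-1)L_2(X^{k+1})}{2}\ge\tfrac{(\gamma_2-1)q_2^-}{2}$, both at least $\tfrac{\delta}{2}$. Chaining the two block inequalities through the shared quantity $\Phi(X^{k+1},Y^k)$ and using $\|Z^{k+1}-Z^k\|_F^2 = \|X^{k+1}-X^k\|_F^2 + \|Y^{k+1}-Y^k\|_F^2$ then gives $\Phi(Z^k)-\Phi(Z^{k+1}) \ge \tfrac{\delta}{2}\|Z^{k+1}-Z^k\|_F^2$, which is the asserted bound; the strict inequality reflects the strict choices $\gamma_1,\gamma_2>1$ (and holds whenever a block actually moves).

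I expect the single conceptual step that makes the whole argument work to be the algebraic recognition in the first paragraph that the proximal subproblem \eqref{eq.proxf}, written around $U^k$, is \emph{exactly} the ``linearize $H$ and add a proximal term'' problem, so that the cross term furnished by optimality precisely cancels the first-order term generated by the descent lemma. Everything downstream is bookkeeping: the lower bounds $q_1^-,q_2^-$ are supplied by property~\ref{pro.lipbd}) of Lemma~\ref{lem.pro}, and the nonsmoothness of $g$ is harmless because the descent lemma touches only the smooth $H$ while the comparison is made at the feasible iterate $Y^k$.
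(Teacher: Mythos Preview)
Your proof is correct and follows exactly the standard sufficient-decrease argument that the paper invokes: the paper's own proof is the single line ``Apply our Lemma~\ref{lem.pro} to the Lemma~3 of~\cite{bolsabteb14},'' and what you have written is precisely the content of that cited lemma specialized to this setting. Your observation about the strict inequality (it becomes equality if neither block moves) is also apt; the statement as written should really carry a $\le$.
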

\begin{proof}
\tc{black}{Consider the proximal operator
\[
\bfu^{k+1} \in \amin
\left\{ 
\eta(\bfu) + \frac{\tau}{2}\| \bfu - ( \bfu^{k} - \frac{1}{\tau} \nabla \bfh (\bfu^k)) \|^2 
\right\}
\]
where $\bfh$ is a continuously differentiable function with Lipschitz constant $L_\bfh$ and $\eta$ is a proper, bounded, lower semicontinuous function. Recall the sufficient decrease property of the proximal map~\cite[Lemma 3.2]{bolsabteb14} 
\begin{equation}
\label{eq.descent}
\bfh(\bfu^{k+1}) + \eta(\bfu^{k+1})
\leq \bfh(\bfu^{k}) + \eta(\bfu^{k}) - \frac{\tau - L_\bfh}{2} \| \bfu^{k+1} - \bfu^{k} \|^2.
\end{equation}
Applying~\eqref{eq.descent} to~\eqref{eq.proxf} and~\eqref{eq.proxg} yields
\begin{align*}
h(X^{k+1},Y^k) + f(X^{k+1}) 
& \leq 
h(X^{k},Y^k) + f(X^{k})  \\
& - \frac{c_k - L_1}{2} \|X^{k+1} - X^k\|_F^2 \\
h(X^{k+1},Y^{k+1}) + g(Y^{k+1}) 
& \leq 
h(X^{k+1},Y^k) + g(X^{k}) \\
& - \frac{d_k - L_2}{2} \|Y^{k+1} - Y^k\|_F^2 .
\end{align*}
Adding these two inequalities leads to 
\begin{align*}
\omega(Z^{k+1}) 
\leq 
\omega(Z^{k}) 
& - \frac{c_k - L_1}{2} \|X^{k+1} - X^k\|_F^2 \\
& - \frac{d_k - L_2}{2} \|Y^{k+1} - Y^k\|_F^2.
\end{align*}
Since $c_k = \gamma_1 L_1$ and $d_k=\gamma_2 L_2$, we obtain
\begin{align*}
\omega(Z^k) - \omega(Z^{k+1})
& \geq 
\frac{(\gamma_1 - 1) L_1}{2} \|X^{k+1} - X^k\|_F^2 \\
& + \frac{(\gamma_2 - 1) L_2}{2} \|Y^{k+1} - Y^k\|_F^2 \\
& \geq  
\frac{\delta}{2} \|Z^{k+1} - Z^k\|_F^2 
\end{align*}
where $\delta \DefinedAs \min\{(\gamma_1 - 1) q_1^-, (\gamma_2 - 1) q_2^-\} $ and $q_1^-,q_2^-$ are the lower bounds of Lipschitz constants defined in~\eqref{eq.lipbd}. Since $\omega$ is bounded below and $\delta$ is strictly positive, it follows that $\lim_{k\to \infty} \|Z^{k+1} - Z^k \|^2_F = 0$. This completes the proof.}
\end{proof}

Proposition~\ref{pro.mono} guarantees that the objective value is monotonically decreasing and the PALM algorithm is globally convergent. Note that $\delta > 0$ throughout iterations because $\gamma_1,\gamma_2 > 1$ (see Algorithm~\ref{alg.palm}) and $q_1^-, q_2^- > 0$ (see Lemma~\ref{lem.pro}). The convergence of the decision variable $Z^k$ can be measured by the convergence of the objective value. The numerical experiments in Section~\ref{sec.exp} verify this convergence behavior.

We next show that Algorithm~\ref{alg.palm} converges to a critical point of $\omega$.\footnote{For nonconvex, nonsmooth functions, the critical point is understood as the points whose Frechet subdifferential contains $0$.} The key step is to establish the KL property of $\omega$.
\begin{definition}[KL property~\cite{bolsabteb14}]
Let $\bff: \R^d \to (-\infty,+\infty]$ be proper and lower semicontinuous. The function $\bff$ is said to have the {\em Kurdyka-Lojasiewicz (KL)\/} property at $\bar{\bfu} \in \mbox{dom} \, \partial \bff \DefinedAs \{ \bfu \in \R^d : \partial \bff(\bfu) \neq \emptyset  \}$ if there exist $\eta \in (0, +\infty]$, a neighborhood ${\cal N}$ of $\bar{\bfu}$, and a scalar-valued function $\psi$ such that for all
$
  \bfu \in {\cal N} \cap \{ \bff(\bar{\bfu}) < \bff(\bfu) < \bff(\bar{\bfu}) + \eta \},
$
the following inequality holds:
$
  \psi'(\bff(\bfu) - \bff(\bar{\bfu})) \cdot \mbox{dist}(0,\partial \bff(\bfu)) \,\geq\, 1,
$
where $()'$ denotes the derivative function and $\mbox{dist}(x,s) \DefinedAs \inf \{ \|y-x\| : y \in {\bf s} \}$ denotes the distance from a point $x \in \R^d$ to a set ${\bf s} \subset \R^d$. A function $\bff$ is called a KL function if $\bff$ satisfies the KL property at each point of the domain of the gradient $\partial \bff$.
\end{definition}

While KL property is a technical condition, it is shown in~\cite{bolsabteb14} that a large class of nonsmooth problems that arise in modern applications satisfy the KL property. For the low-complexity autoregressive model~\eqref{eq.lcstd}, the concept of semi-algebraic function is instrumental in establishing the KL property.

\begin{definition}[Semi-algebraic function~\cite{bolsabteb14}]
A subset ${\cal S}$ of $\R^d$ is a real {\em semi-algebraic set\/} if there exists a finite number of real polynomial functions ${\bf g}_{ij}$ and ${\bf h}_{ij} : \R^d \to \R$ such that
$
{\cal S} \,=\, \bigcup_{j=1}^p \bigcap_{i=1}^q \{  \bfu \in \R^d: {\bf g}_{ij}(\bfu) = 0 ~\mbox{and}~ {\bf h}_{ij}(\bfu) < 0  \}.
$
A function $\bfh:\R^d \to (-\infty,+\infty]$ is called semi-algebraic function if its graph $\{(\bfu,v) \in \R^{d+1} : \bfh(\bfu) = v \}$ is a semi-algebraic subset of $\R^{d+1}$.
\end{definition}

A proper, lower semicontinuous, and semi-algebraic function satisfies the KL property; see~\cite[Theorem 5.1]{bolsabteb14}. Based on this result, we now show the KL property of $\omega$.

\begin{lemma} \label{lem.kl}
  The objective function $\omega$ in~\eqref{eq.lcstd} satisfies the KL property.
\end{lemma}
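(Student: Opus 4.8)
The plan is to invoke the result quoted immediately before the lemma---that every proper, lower semicontinuous, and semi-algebraic function is a KL function (Theorem 3 of~\cite{bolsabteb14})---and therefore to verify these three properties for $\Phi$. Since $\Phi = f + g + H$, and since properness, lower semicontinuity, and semi-algebraicity are each preserved under finite sums, it suffices to check the three properties for $f$, $g$, and $H$ individually.

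First I would dispose of properness and lower semicontinuity. The terms $f$ in~\eqref{eq.f} and $H$ in~\eqref{eq.H} are polynomials in the entries of $X$ and $Y$, hence continuous and finite everywhere, while $g$ is the indicator function of a feasible set. Properness of $\Phi$ follows as soon as the feasible set is nonempty (so $\Phi\not\equiv+\infty$) together with $f,H,g\ge 0$ (so $\Phi>-\infty$), which is exactly Property~\ref{pro.lb}) of Lemma~\ref{lem.pro}. Lower semicontinuity of $g$ is equivalent to closedness of its feasible set, so I would simply record that each of the four sets is closed: the $\ell_1$-ball~\eqref{eq.l1} and the nuclear-norm ball~\eqref{eq.nuc} are closed convex sets; the cardinality set $\{Y:\card(Y)\le s\}$ from~\eqref{eq.cardc} is a finite union of coordinate subspaces; and the rank set $\{Y:\rank(Y)\le r\}$ from~\eqref{eq.rankc} is the common zero set of the $(r{+}1)\times(r{+}1)$ minors, hence closed.

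The crux is semi-algebraicity. As polynomials, $f$ and $H$ are semi-algebraic, so the remaining task is to show that the indicator $g$ is semi-algebraic, i.e.\ that each feasible set is a semi-algebraic subset of $\R^{p\times p}$. For the cardinality constraint this is immediate: $\{Y:\card(Y)\le s\}$ is a finite union, over index subsets, of affine subspaces cut out by the polynomial equations $Y_{ij}=0$. For the rank constraint, $\{Y:\rank(Y)\le r\}$ is precisely the common zero set of all $(r{+}1)\times(r{+}1)$ minors of $Y$, which are polynomials in the entries of $Y$; this is a real algebraic variety and in particular semi-algebraic. For the $\ell_1$ constraint, $\{Y:\sum_{ij}|Y_{ij}|\le l\}$ is a polytope, namely the intersection of the finitely many half-spaces $\sum_{ij}\theta_{ij}Y_{ij}\le l$ over all sign patterns $\theta_{ij}\in\{-1,+1\}$, and each half-space is semi-algebraic.

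The single case requiring more care---and the step I expect to be the main obstacle---is the nuclear-norm constraint, because the singular values are not polynomial functions of the entries of $Y$. Here I would argue that the singular values of $Y$ are the nonnegative square roots of the eigenvalues of $YY^T$, which are the roots of the characteristic polynomial $\det(\lambda I - YY^T)$ whose coefficients are polynomials in the entries of $Y$; consequently the graph of $Y\mapsto\sum_i\sigma_i(Y)$ is definable by a first-order formula over the reals and is therefore semi-algebraic by the Tarski--Seidenberg principle. (Equivalently, one may invoke the known semidefinite representation of the nuclear-norm ball, which is manifestly semi-algebraic.) With all four feasible sets shown to be semi-algebraic, $g$ is semi-algebraic; hence so is $\Phi=f+g+H$, and the quoted theorem delivers the KL property.
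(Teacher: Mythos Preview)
Your proof is correct and follows the same route as the paper: verify that $\Phi$ is proper, lower semicontinuous, and semi-algebraic, then invoke Theorem~3 of~\cite{bolsabteb14}. The paper's proof is in fact terser---it simply cites Examples~2 and~3 of~\cite{bolsabteb14} for the cardinality and rank sets and does not explicitly treat the $\ell_1$ or nuclear-norm cases---so your explicit arguments for all four constraint sets (including the Tarski--Seidenberg step for the nuclear norm) go a bit beyond what appears there.
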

\begin{proof}
Since $\omega$ is the summation of smooth functions $f$, $h$ and the indicator function $g$ that is lower semicontinuous, it follows that $\omega$ is a proper and lower semicontinuous function. To show that it is a semi-algebraic function, we examine each term in $\omega$. Clearly, $f$ and $h$ are semi-algebraic because they are real-valued polynomials. Moreover, the indicator function of the semi-algebraic set $\{Y \,|\, \card(Y) \leq s\}$ is semi-algebraic, and the indicator function of the semi-algebraic set $\{Y \,|\, \rank(Y) \leq r\}$ is also semi-algebraic; see~\cite{bolsabteb14}. A finite sum of semi-algebraic functions is semi-algebraic. This completes the proof.
\end{proof}

\tc{black}{We conclude this section by invoking the convergence result~\cite[Theorem 3.1]{bolsabteb14} of PALM for KL functions.
\begin{proposition}
  \label{pro.finite}
  Let $Z^k = (X^k,Y^k)$ be a sequence generated by the PALM algorithm. Suppose that $\omega$ is a KL function that satisfies the properties in Lemma~\ref{lem.pro}. Then the sequence $\{Z^k\}$ converges to a critical point $Z^* \;=\; (X^*,Y^*)$ of $\omega$.
\end{proposition}}


\section{Numerical Experiments}
\label{sec.exp}

In this section, we evaluate the performance of Algorithm~\ref{alg.palm} on both synthetic and real-world data. We demonstrate that the solution converges to a matrix with the prescribed level of nonzero elements or matrix rank. Furthermore, the objective value (i.e., estimation error) decreases monotonically as predicted by the convergence analysis. 

We also compare the estimation errors of the autoregressive models obtained from nonconvex and convex constraints; in particular, we focus on the cardinality constraint versus the $\ell_1$ constraint. Our numerical results show that the cardinality constraint achieves a smaller error than the $\ell_1$ constraint on a variety of systems drawn from the COMPLeib library~\cite{lei04,leilip04}. Moreover, we show that PALM outperforms with gradient projection method when handling the $\ell_1$ constraint.

In our experiments, we assume that the covariance matrix of the noise $\epsilon(t)$ is $Q = \sigma^2 I$. We set $\gamma_1 = \gamma_2 = 2$ in Algorithm~\ref{alg.palm}. The hyperparameters $\rho_1$ and $\sigma$ are determined through cross validation.

\subsection{Synthetic Data}

We test the performance of the proposed method on a sparse example and a low-rank example with synthetic transition matrices of size $200 \times 200$. In both examples, we use time series of length $n=50$ for training and $m=800$ for testing. For steady-state data, we set the length $N=1600$. The performance of the identified autoregressive model is evaluated by using the normalized error and the cosine score proposed in~\cite{huasch11}
\begin{align*}
\text{Normalized error: }  \frac{1}{m-1}\sum_{t=1}^{m-1}\frac{\|\phi(t+1) - \widehat{A} \phi(t)\|}{\| \phi(t+1) -  \phi(t)\|}\\
\text{Cosine score: } 
 \frac{1}{m-1}\sum_{t=1}^{m-1}\frac{|( \phi(t+1) -  \phi(t))^T( \phi(t ) - \widehat{A} \phi(t))|}{ \| \phi(t+1) - \phi(t)\|\| \phi(t) - \widehat{A} \phi(t)\|}.
\end{align*}
 A smaller normalized error (lower bounded by 0) and a higher cosine score (upper bounded by 1) imply better performance.

\subsubsection{Sparse Example}
The sparse matrix is generated by using the rule
$
  A = (0.95 M)/\max_k (|\lambda_k(M)|),
$
where $M$ has $5000$ normally distributed nonzero elements and $\lambda_k(M)$ denotes the eigenvalues of $M$. We set $s = 5000$ in the cardinality constraint~\eqref{eq.cardc}.

Figure~\ref{fig.sparse} shows the convergence results. The objective value monotonically decreases, as Proposition~\ref{pro.mono} indicates. The errors in two consecutive steps, namely,  
$e^k_X \,=\, \| X^{k+1} - X^k \|_F$,
$e^k_Y \,=\, \| Y^{k+1} - Y^k \|_F$, 
$e^k_{XY} \,=\, \| X^{k} - Y^k \|_F$,
all decrease quickly. It takes fewer than $30$ iterations to reach $e_X,e_Y \leq 10^{-4}$ and $e_{XY} \leq 3.5 \times 10^{-4}$. Note that the solution has exactly $5000$ nonzero elements as required by the cardinality constraint. For the estimated matrix $\widehat{A}$, the normalized error is $0.2848$ and the cosine score is $0.9582$.

\begin{figure*}
  \centering
  \begin{tikzpicture}
    \begin{semilogyaxis} [width=0.35\textwidth,
      xlabel = PALM iteration index $k$,
      mark repeat={5},
      title = Objective value \mbox{$\omega(X^k,Y^k)$}
      ]
      \addplot table[x=Iter,y=obj] {cardp200_v2.txt};
    \end{semilogyaxis}
  \end{tikzpicture}
  \begin{tikzpicture}
    \begin{semilogyaxis} [width=0.35\textwidth,
      xlabel = PALM iteration index $k$,
      mark repeat={5},
      title = Errors in consecutive steps,
      ]
      \addplot table[x=Iter,y=diffX] {cardp200_v2.txt};
      \addplot table[x=Iter,y=diffY] {cardp200_v2.txt};
      \addplot table[x=Iter,y=diffXY] {cardp200_v2.txt};
      \legend{$e^k_X$,$e^k_Y$,$e^k_{XY}$}
    \end{semilogyaxis}
  \end{tikzpicture}
  \begin{tikzpicture}
    \begin{axis}[width=0.35\textwidth,
      xlabel=$5000$ nonzero entries,
      title=Sparsity pattern of $\widehat{A}$]
      \addplot graphics [
      xmin=0,xmax=200,ymin=0,ymax=200,
      includegraphics={trim=12 9 12 8,clip}] {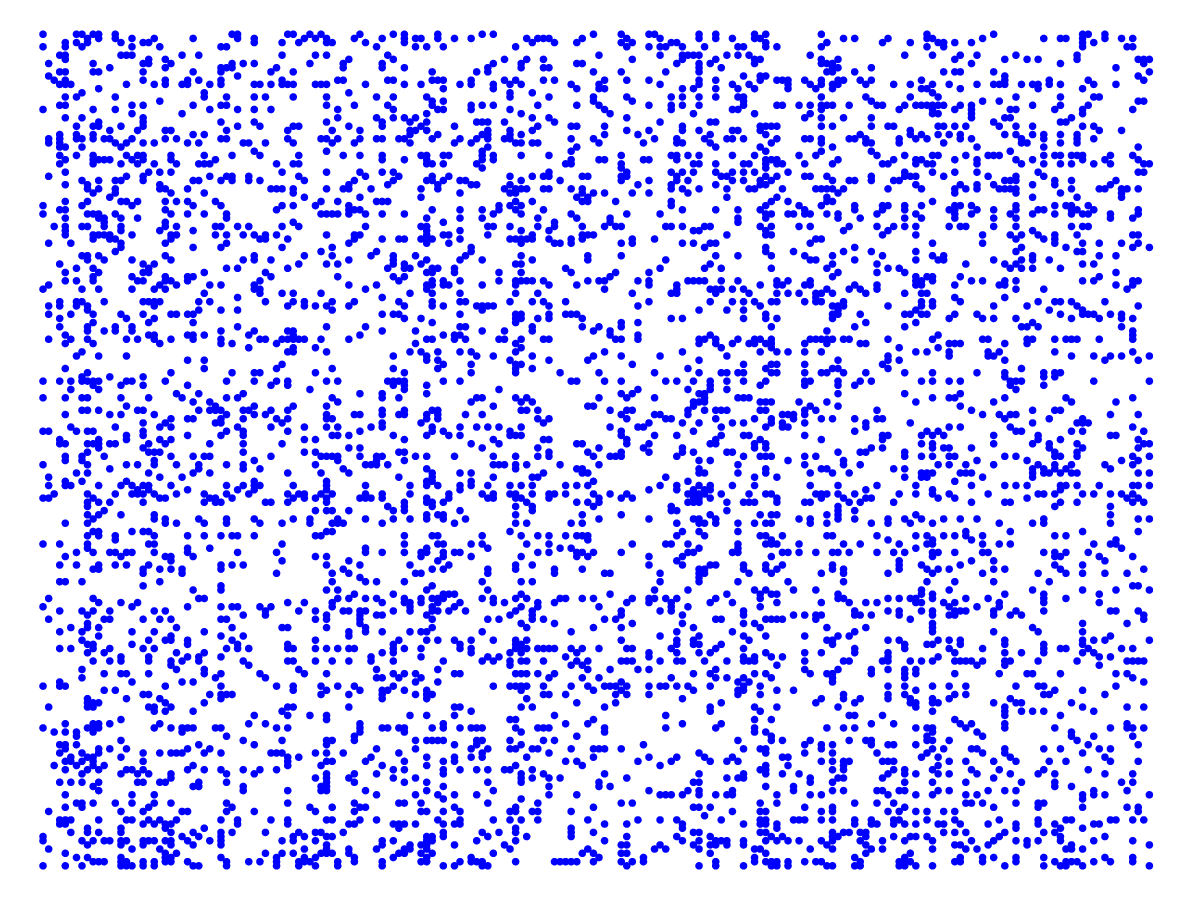};
    \end{axis}
  \end{tikzpicture}
  \caption{Convergence results of PALM for the sparse example: the objective value (left), the errors in consecutive steps (middle), and the sparse solution with $5000$ nonzero entries (right).}
  \label{fig.sparse}
\end{figure*}

\subsubsection{Low-Rank Example}

The low-rank matrix is generated by using the rule $A = {\cal U} \Sigma {\cal V}$, where $\Sigma \in \R^{25 \times 25}$ is a diagonal matrix with random diagonal entries uniformly distributed in $[0,1)$, and ${\cal U} \in \R^{200 \times 25}$ and ${\cal V} \in \R^{25 \times 200}$ are random orthonormal matrices. By construction $A \in \R^{200 \times 200}$ is stable with $\rank(A) = 25$. We set $r = 25$ in the rank constraint~\eqref{eq.rankc}.

Figure~\ref{fig.lowrank} shows the convergence results. Similar to those for the sparse example in Figure~\ref{fig.sparse}, we observe that the objective value $\omega$ monotonically decreases and the errors in two consecutive steps decrease quickly. It takes fewer than $30$ iterations to reach $e_X,e_Y \leq 3 \times 10^{-5}$ and $e_{XY} \leq 2 \times 10^{-4}$. The solution has a numerical rank $25$, as required by the rank constraint. For the estimated matrix $\widehat{A}$, the normalized error is $0.6949$ and the cosine score is $0.7189$.

\begin{figure*}[t]
  \centering
  \begin{tikzpicture}
    \begin{semilogyaxis} [width=0.35\textwidth,
      xlabel = PALM iteration index $k$,
      mark repeat={5},
      title = Objective value \mbox{$\omega(X^k,Y^k)$}
      ]
      \addplot table[x=Iter,y=obj] {rankp200_v2.txt};
    \end{semilogyaxis}
  \end{tikzpicture}
  \begin{tikzpicture}
    \begin{semilogyaxis} [width=0.35\textwidth,
      xlabel = PALM iteration index $k$,
      mark repeat={5},
      title = Errors in consecutive steps,
      ]
      \addplot table[x=Iter,y=diffX] {rankp200_v2.txt};
      \addplot table[x=Iter,y=diffY] {rankp200_v2.txt};
      \addplot table[x=Iter,y=diffXY] {rankp200_v2.txt};
      \legend{$e^k_X$,$e^k_Y$,$e^k_{XY}$}
    \end{semilogyaxis}
  \end{tikzpicture}
  \begin{tikzpicture}
    \begin{semilogyaxis} [width=0.35\textwidth,
      xlabel = Eigenvalue index $i$,
      mark repeat={5},
      title = $|\lambda_i(X)|$
      ]
      \addplot table[x=Index,y=eigAest] {rankp200Eig_v2.txt};
    \end{semilogyaxis}
  \end{tikzpicture}
  \caption{Convergence results of PALM for the low-rank example: the objective value (left), the errors in consecutive steps (middle), and the low-rank solution with $25$ nonzero eigenvalues (right).}
  \label{fig.lowrank}
\end{figure*}
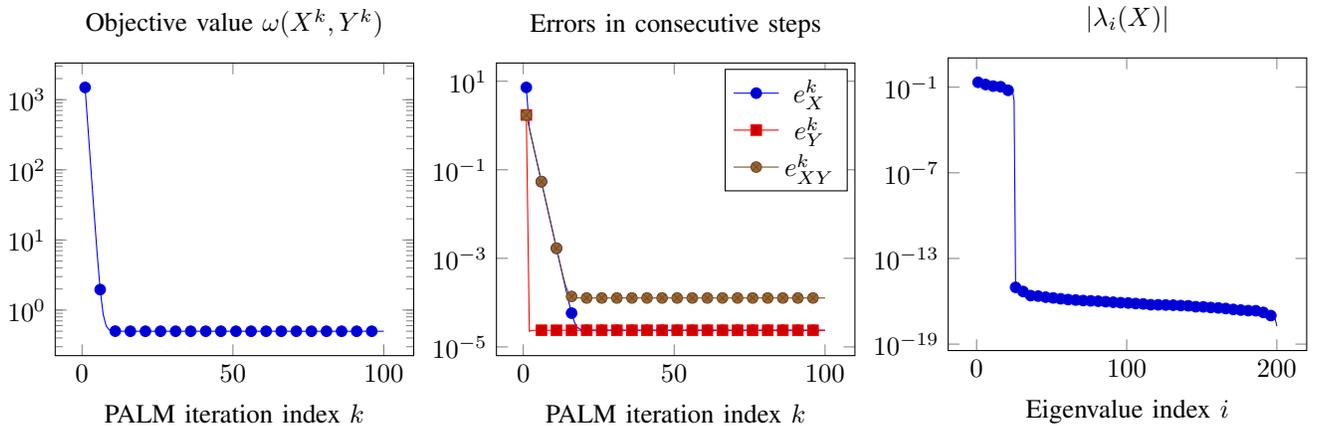

\subsection{Electricity Load Data}

We explore the utility of the low-complexity models on an electricity load data set from the UCI repository.\footnote{\url{http://archive.ics.uci.edu/ml/datasets/ElectricityLoadDiagrams20112014}} The data set consists of 15-minute interval load readings of clients over 1461 days. To investigate the daily dynamics, we aggregate the data in every 24-hour interval. Because over half of the clients are not registered in the first year, we start from the second-year data and collect clients whose time series data are uninterrupted. Interruptions may arise from late registration of clients, missing data, or a period of low electricity consumption due to inactivity. Such a preprocessing results in $272$ clients and $1095$ daily readings per client. We further subtract each time series by its seasonal mean, that is, the mean of the same day along all the years, and normalize it by the standard deviation.

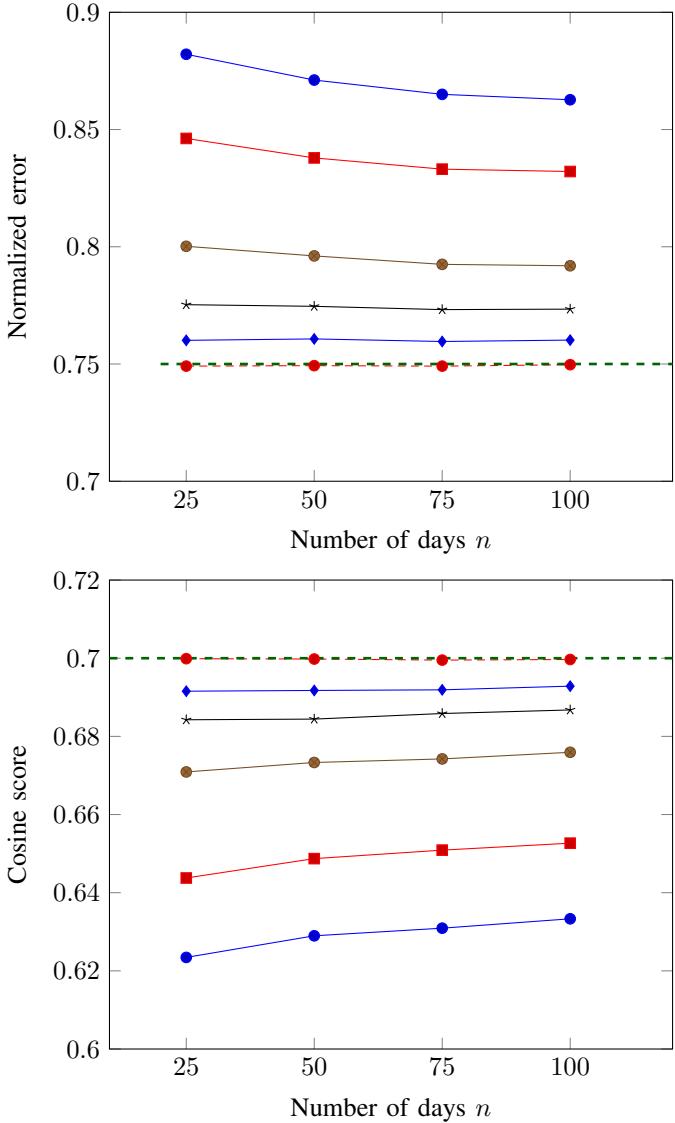
\begin{figure}
  \centering
  \begin{tikzpicture}
    \begin{axis} [width=0.5\textwidth,
      xlabel = Number of days $n$,
      ylabel = Normalized error,
      ymin=0.70,ymax=0.9,xmin=10,xmax=120,
      xtick = {25,50,75,100}
      ]
      \addplot table[x=n,y=card100] {elec_card_normalized_error.txt};
      \addplot table[x=n,y=card125] {elec_card_normalized_error.txt};
      \addplot table[x=n,y=card150] {elec_card_normalized_error.txt};
      \addplot table[x=n,y=card175] {elec_card_normalized_error.txt};
      \addplot table[x=n,y=card200] {elec_card_normalized_error.txt};
      \addplot table[x=n,y=cardnosp] {elec_card_normalized_error.txt};
      \addplot +[dashed,mark=none,darkgreen,line width = 1pt] coordinates {(20, 0.75) (120, 0.75)};
    \end{axis}
  \end{tikzpicture}
  \begin{tikzpicture}
    \begin{axis} [width=0.5\textwidth,
      xlabel = Number of days $n$,
      ylabel = Cosine score,
      ymin=0.6,ymax=0.72,xmin=10,xmax=120,
      xtick = {25,50,75,100}
      ]
      \addplot table[x=n,y=card100] {elec_card_cos_score.txt};
      \addplot table[x=n,y=card125] {elec_card_cos_score.txt};
      \addplot table[x=n,y=card150] {elec_card_cos_score.txt};
      \addplot table[x=n,y=card175] {elec_card_cos_score.txt};
      \addplot table[x=n,y=card200] {elec_card_cos_score.txt};
      \addplot table[x=n,y=cardnosp] {elec_card_cos_score.txt};
      \addplot +[dashed,mark=none,darkgreen,line width = 1pt] coordinates {(10, 0.7) (120, 0.7)};
    \end{axis}
  \end{tikzpicture}
  \caption{Electricity load data: Performance comparison between the least squares estimator~\eqref{eq.ls} when $p<n$ (\textcolor{darkgreen}{--}) and the Lyapunov-penalized model~\eqref{eq.lc} when $p>n$, with different levels of cardinality: $s=100p$ (\textcolor{blue}{$\bullet$}), $s=125p$ (\textcolor{red}{\ding{110}}), $s=150p$ (\textcolor{brown}{$\bullet$}), $s=175p$ (\textcolor{black}{$*$}),  $s=200p$ (\textcolor{blue}{$\diamond$}), $s=p^2$ (\textcolor{red}{$\bullet$}).}
  \label{fig.elec}
\end{figure}

We first use the least squares estimator~\eqref{eq.ls} to obtain a reference model. To this end, the first $995$ days are used for training and the last $100$ days are used for testing. Note that the reference model provides an upper bound on the performance, because the number $n$ of measurements is sufficiently greater than the data dimension $p$.

Next, we test the low-complexity models~\eqref{eq.lc} with different thresholds for the cardinality and the rank constraint. In particular, we set $s \in \{100p,125p,150p,175p,200p,p^2\}$ and similarly $r \in \{100,125,150,175,200,p\}$. We take the first $n$ days with $n \in \{25,50,75,100\}$ as the training data, the last $100$ days as the testing data, and 600 randomly sampled days in the remaining dataset as the steady-state data. We repeat the experiment five times for each set of $s$, $r$, and $n$.

Figure~\ref{fig.elec} shows the performance measures as the number $n$ of training data and the sparsity level $s$ vary. Three observations can be made. First, the normalized error and the cosine score are not sensitive to the length of the training data, because the performance varies slightly with $n$. This fact indicates that the Lyapunov penalty as a regularization is effective. Second, as the complexity of the transition matrix increases (e.g., a larger $s$), the performance gets closer to that of the least squares estimator. Third, in the case of no constraints (i.e., $s = p^2$), the Lyapunov-penalized VAR model~\eqref{eq.hs11} performs as well as the least squares estimator~\eqref{eq.ls}. In other words, the Lyapunov-penalized VAR model with a small number of time sequence data and a large number of nonsequence data is as competitive as the least squares estimator with a large amount of time sequence data. This result demonstrates the utility of the proposed method when time sequence data is limited.

\subsection{Comparison different penalties and different methods}


We test on a variety of dynamical systems from the {\em COMPleib\/} library~\cite{lei04,leilip04}. This set of systems is drawn from aircraft, helicopter, jet engine, reactor, decentralized interconnected systems, and wind energy systems. The set consists of $40$ continuous-time systems with the dimension of the state matrix $A_c \in \R^{p \times p}$ ranging from $p=3$ to $p=40$. For each model, we generate $n = p/2$ sequence data and $N = 5n$ nonsequence data for training, and $m = p$ sequence data for testing.

We use PALM to solve the problem with cardinality constraint~\eqref{eq.cardc} and $\ell_1$ constraint~\eqref{eq.l1}. For a fair comparison, the number of nonzero elements of the solution $\widehat{A}$ must be the same in both cases. For the cardinality constraint, we set the desired number of nonzeros to be $s = \alpha p^2$ for $\alpha \in \{1/2,1/4,1/8\}$. For the $\ell_1$ constraint $\|X\|_{\ell_1} \leq l$, the upper bound $l$ that yields the desired number of nonzero elements is unknown a priori. To find the matching $l$, we use  a   bisection method: Starting from an interval $[l_{\rm low}, l_{\rm up}]$ that contains the unknown $l$, repeatedly solve~\eqref{eq.lc} and divide the interval by half, until  $l$ for the desired number of nonzero elements is found or the interval is sufficiently small. We also use gradient projection~(GP) to solve~\eqref{eq.lc} with the $\ell_1$ constraint. 

Table~\ref{tab.cardl1} shows the performance of PALM-card, PALM-$\ell_1$, and  GP-$\ell_1$ methods. PALM-card outperforms the other two approaches in achieving a smaller normalized error and a higher cosine score. The percentage of cases where PALM-card outperforms the others increases with the level of sparsity, from $ 62.5\%$  for $\alpha = 1/4$ to $82.5\%$ for $\alpha=1/8$. Similarly PALM-card yields the highest cosine score in $60\%$ of the systems when $\alpha=1/4$ and in $77.5\%$ of the systems when $\alpha = 1/8$. When the $\ell_1$ constraint is used, PALM outperforms GP when $\alpha =1/2$ and $\alpha = 1/4$. 

Figure~\ref{fig.compleib} shows the normalized error and the cosine score for the three methods when $\alpha = 1/4$. Note that for 14 test problems, the errors resulted from GP-$\ell_1$ is at least two times (and  up to 43 times) of the errors from PALM-card and PALM-$\ell_1$. Similar observations can be made for the cosine score. For 12 test problems, PALM-card and PALM-$\ell_1$ result in cosine scores that are at least twice of those obtained from GP-$\ell_1$. These results suggest that the cardinality constraint is more effective than the $\ell_1$ constraint and that PALM outperforms GP by converging to better solutions.

As mentioned earlier, PALM requires no tuning for the stepsize in contrast to GP. This feature makes PALM computationally more efficient when the projection onto the constraint set becomes nontrivial. For the projection onto the $\ell_1$-ball, it turns out that the most time-consuming computation in GP is to compute the stepsize by using the Armijo rule along the projection-arc~\cite{ber99}. This is because GP requires a number of $\ell_1$ projections to compute the stepsize. As a consequence,  for the $\ell_1$ constraint PALM is computationally more efficient than GP.

\begin{table*}
  \centering
  \caption{Performance of PALM-card, PALM-$\ell_1$, and GP-$\ell_1$ on COMPLeib test problems. The sparsity level is indicated by $\alpha = s / p^2$. The table shows the number of test problems where each method outperforms the other two. For example, when $\alpha=1/4$, PALM-card achieves the smallest normalized error in 30 test problems and  the highest cosine score in 27 test problems.}
  \begin{tabular}{c|ccc|ccc|}
  \cline{2-7}
             & \multicolumn{3}{|c|}{Normalized error}                         & \multicolumn{3}{|c|}{Cosine score}  \\
  \hline
    \multicolumn{1}{|c|}{$\alpha$} & PALM-card & PALM-$\ell_1$ & GP-$\ell_1$ & PALM-card & PALM-$\ell_1$ & GP-$\ell_1$  \\
    \hline
    \multicolumn{1}{|c|}{$1/2$}    & 25        & 12             & 3           & 24 & 12 & 4\\
    \multicolumn{1}{|c|}{$1/4$}    & 30        & 6             & 4           & 27 & 10 & 3 \\
    \multicolumn{1}{|c|}{$1/8$}    & 33        & 2             & 5           & 31 & 3  & 6 \\
    \hline
  \end{tabular}
  \label{tab.cardl1}
\end{table*}

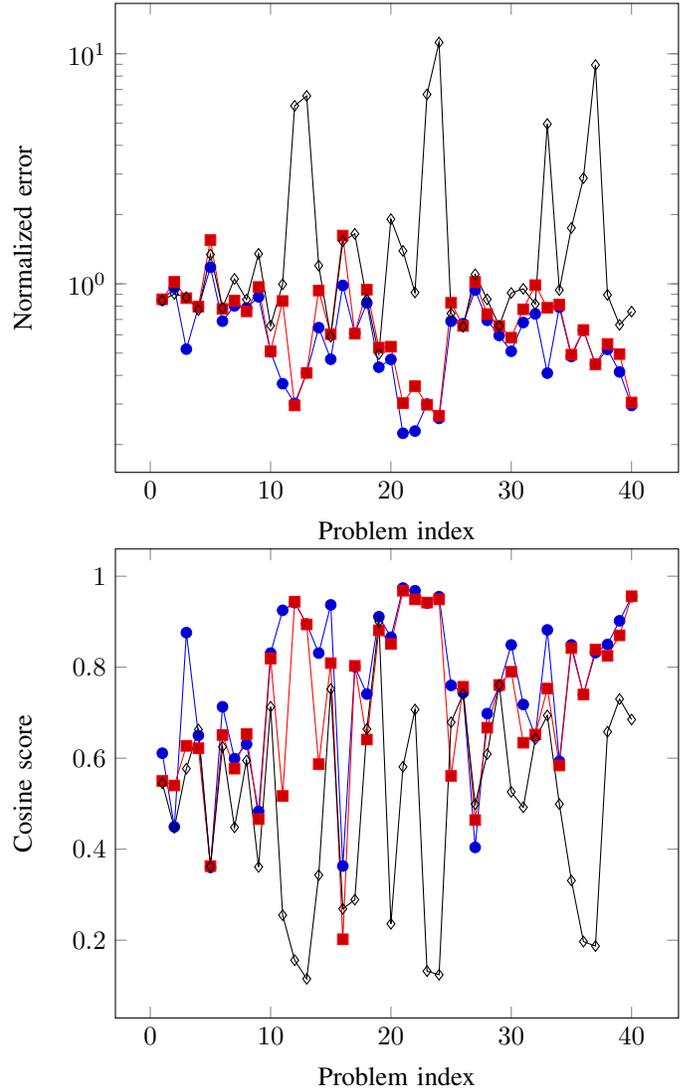
\begin{figure}
  \centering
  \begin{tikzpicture}
    \begin{semilogyaxis} [width=0.5\textwidth,
      xlabel = Problem index,
      ylabel = Normalized error
      ]
      \addplot table[x=index,y=PALMcard] {Compleib_normalized_err_4.txt};
      \addplot table[x=index,y=PALML1] {Compleib_normalized_err_4.txt};
      \addplot[mark=diamond] table[x=index,y=ProjL1] {Compleib_normalized_err_4.txt};
    \end{semilogyaxis}
  \end{tikzpicture}
  \begin{tikzpicture}
    \begin{axis} [width=0.5\textwidth,
      xlabel = Problem index,
      ylabel = Cosine score
      ]
      \addplot table[x=index,y=PALMcard] {Compleib_cos_score_4.txt};
      \addplot table[x=index,y=PALML1] {Compleib_cos_score_4.txt};
      \addplot[mark=diamond] table[x=index,y=ProjL1] {Compleib_cos_score_4.txt};
    \end{axis}
  \end{tikzpicture}
  \caption{Performance of PALM-card (\tc{blue}{$\bullet$}), PALM-$\ell_1$ (\tc{red}{\ding{110}}), and GP-$\ell_1$ ($\diamond$) on COMPLeib test problems, with sparsity level $\alpha = 1/4$.}
  \label{fig.compleib}
\end{figure}

\section{Conclusions}
\label{sec.concl}

We estimate the state transition matrix of a vector autoregressive model, with limited time sequence data but abundant nonsequence steady-state data. To reduce the complexity of the model, we propose imposing a cardinality or a rank constraint on the transition matrix. We develop the PALM algorithm to solve the resulting nonconvex, nonsmooth problem and establish its global convergence to a critical point. Numerical experiments empirically verify the convergence and demonstrate the advantage of PALM over the gradient projection method.

Several directions may be pursued following this work. First, we observe a linear convergence of the algorithm (e.g., Fig.~\ref{fig.sparse} and Fig.~\ref{fig.lowrank}). We intend to investigate the convergence rate theoretically. Second, the identified model is only one of many legitimate models that explain the given data. It is thus of interest to understand under what conditions the low-complexity model is asymptotically consistent with the ground truth, if it is sparse or low-rank in the first place. Third, while the VAR model itself has low complexity, the optimization algorithm still requires storage and computation with $p\times p$ matrices. When $p$ is too large, it would be interesting to investigate methods that reduce the cost through approximately updating the unknowns (for example, in the rank-constraint case, randomized SVD is more efficient than standard SVD).

\section*{Acknowledgments}

We thank the reviewers for constructive comments that improve this work. F. Lin is supported in part by the U.S. Department of Energy, Office of Science, Office of Advanced Scientific Computing Research, Applied Mathematics program under contract number DE-AC02-06CH11357. J. Chen is supported in part by XDATA program of the Defense Advanced Research Projects Agency (DARPA), administered through Air Force Research Laboratory contract FA8750-12-C-0323.

%


\bibliographystyle{IEEEtran}
\bibliography{VAR}

\end{document}